\documentclass[preprint]{elsarticle}
\usepackage{graphicx} 
\usepackage{amsmath}
\usepackage{tikz}
\usepackage{tikz-3dplot}
\usepackage{amsmath, amsthm, amscd, amsfonts, amssymb,color, tikz, enumerate}
\usepackage{amsfonts}
\usepackage{amsmath,amsthm,amscd,amssymb,mathrsfs,setspace}
\usepackage{latexsym,epsf,epsfig}
\usepackage{bm}
\usepackage{color}
\usepackage[hmargin=2.5cm,vmargin=3cm]{geometry}
\usepackage{mathtools}
\usepackage{dirtytalk}
\usepackage{cancel}
\usepackage[shortlabels]{enumitem}
\usepackage [english]{babel}
\usepackage [autostyle, english = american]{csquotes}
\usepackage{mathtools}
\usepackage{multicol}
\MakeOuterQuote{"}

\newtheorem{theorem}{Theorem}[section]
\newtheorem{lemma}[theorem]{Lemma}
\newtheorem{proposition}[theorem]{Proposition}

\theoremstyle{remark}
\newtheorem{remark}{Remark}[section]
\numberwithin{equation}{section}

\theoremstyle{plain}

\newcommand{\comments}[1]{}

\newcommand{\N}{\mathbb N}
\DeclareMathOperator*{\esssup}{ess\,sup}

\newcommand{\be}{\begin{equation}}
\newcommand{\ee}{\end{equation}}
\newcommand{\bes}{\begin{equation*}}
\newcommand{\ees}{\end{equation*}}

\newcommand{\bea}{\begin{eqnarray}}
\newcommand{\eea}{\end{eqnarray}}
\newcommand{\beas}{\begin{eqnarray*}}
\newcommand{\eeas}{\end{eqnarray*}}

\newcommand{\red}{\color{red}}

\def\<{\langle} 
\def\>{\rangle}

\title{Reformulation and Interpretation of the Regularity Criterion for 3D NSE \\ Based on Finitely Many Observations}
 \author{\small \begin{tabular}[t]{c@{\extracolsep{2em}}cc@{\extracolsep{2em}}}
         Abhishek Balakrishna & Animikh Biswas \\ 
 \it USC ~~&~~\it UMBC \\ 
ab45315@usc.edu & abiswas@umbc.edu 
\end{tabular}}
\date{March 2023}

\usepackage{subcaption} 

\newcommand{\MyElev}{70}   
\newcommand{\MyAzim}{120}   

\newcommand{\PanelScale}{1.2}

\newcommand{\CubeCoords}{%
  \coordinate (O)   at (0,0,0);
  \coordinate (A)   at (1,0,0);
  \coordinate (B)   at (0,1,0);
  \coordinate (C)   at (0,0,1);
  \coordinate (AB)  at (1,1,0);
  \coordinate (AC)  at (1,0,1);
  \coordinate (BC)  at (0,1,1);
  \coordinate (ABC) at (1,1,1);
}
\newcommand{\DrawCube}{%
  \foreach \u/\v in {O/A,O/B,O/C, A/AB,A/AC, B/AB,B/BC, C/AC,C/BC, AB/ABC,AC/ABC,BC/ABC}
    \draw[gray!70] (\u) -- (\v);
}
\newcommand{\DrawGuides}{%
  \draw[gray!70,dashed] (O) -- (ABC);
  \draw[gray!70,dashed] (A) -- (B);
  \draw[gray!70,dashed] (A) -- (C);
  \draw[gray!70,dashed] (B) -- (C);
}
\newcommand{\HighlightTetra}[5]{%
  \filldraw[fill=#1!30,draw=#1,opacity=.35,very thick] (#2)--(#3)--(#4)--cycle;
  \filldraw[fill=#1!25,draw=#1,opacity=.35,very thick] (#2)--(#3)--(#5)--cycle;
  \filldraw[fill=#1!20,draw=#1,opacity=.35,very thick] (#2)--(#4)--(#5)--cycle;
  \filldraw[fill=#1!15,draw=#1,opacity=.35,very thick] (#3)--(#4)--(#5)--cycle;
}

\begin{document}
\begin{abstract}
   We revisit and sharpen a recent ``observable'' regularity criterion for the three-dimensional Navier--Stokes equations on the periodic cube by requiring only finitely many measurements of the flow on a time interval $[0,T]$.. Two data models are treated: (i) \emph{modal} observations (a finite set of low Fourier modes), and (ii) \emph{nodal} observations, i.e. values of the velocity field sampled at finitely many points on a uniform grid. The key upgrade is a piecewise \emph{linear} interpolant built on a fixed five-tetrahedra subdivision of each grid cube, which removes the mollification step used previously and yields an explicit $H^{1}$ control of the interpolant purely in terms of the measured data. The criterion is also shown to be both necessary and sufficient for regularity.
\end{abstract}

\maketitle

\section{Introduction}
Let $\Omega\subset \mathbb{R}^3$ be a bounded domain with piece-wise smooth boundary $\Gamma$. Assume $\Omega$ is filled with a viscous, incompressible fluid modeled by the 3D Navier-Stokes equations (3D NSE):
 \begin{equation}\label{1}
\begin{split}
    \frac{\partial u}{\partial t}+(u\cdot\nabla)u-\nu\Delta u+\nabla p=f;~~~~~\nabla\cdot u=0;~~~~~
    u|_{t=0}=u_0.
\end{split}
\end{equation}
 For simplicity,  we consider our domain $\Omega\equiv[0,L]^3$ with periodic boundary conditions in the space variables.
 Here, $u$ is the fluid velocity field, $p$ is the associated pressure, and $f\equiv f(x)$ is the given body force. 

 We utilize standard notions of (Leray-Hopf) weak solutions and strong solutions for 3D NSE; see for instance \cite{Leray}. It suffices to say that {\em weak solutions} to \eqref{1} are taken in the sense of equation 4.3 \cite{fmrt}. We say that a weak solution $u$ is {\em regular} over the time interval $[0,T]$ when $\|u(t)\|_{H^1}$ is bounded for $t\in [0,T]$\footnote{Here and below $W^{s,q}(\Omega)$ is the standard Sobolev space and $H^s(\Omega)=W^{s,2}(\Omega)$}. In the latter case, weak solutions are strong in the sense that they satisfy the 3D NSE in a point-wise sense. The global-in-time existence of strong solutions to the 3D NSE is a well known open problem. In the absence of Hadamard well-posedness for weak or strong solutions, several so called regularity criteria have been developed. These criteria constitute additional conditions on weak solutions which ensure they are, in fact, regular. We now remark on several established examples. 
 
 Among the earliest results is the classical Prodi-Serrin-Ladyzhenskaya regularity criterion \cite{Serrin}, which states that a weak solution is regular on $(0,T)$ provided $u \in L^s(0,T;L^q(\Omega))$ for exponents satisfying the scaling condition $\frac{3}{q}+\frac{2}{s}\le 1$ with $2<s<\infty$ and $3<q< \infty$. The borderline case $u\in L^\infty(0,T;L^3(\Omega))$ was later established by Seregin and Sverak \cite{sverak}. 
Another celebrated regularity criteria, applicable also to the inviscid Euler equations,  is  due to Beale-Kato-Majda \cite{BKM}  and it assets that if the vorticity $\omega = \nabla \times u \in L^1(0,T; L^\infty(\Omega))$, then $u$ is regular. Other criteria, similar to the Prodi-Serrin-Ladyzhenskaya, but  involving either the velocity gradient $\nabla u$, or the pressure $p$, or only one component of the velocity,  can be found in \cite{ber, chaelee, daveiga, kukavica, zhou, he, kukaz, pokorny, titi, zhou02}

Each of the regularity criteria mentioned above requires the knowledge of (an adequate norm of) the solution  in space. {\em The principal novelty of the regularity criterion proposed by the authors in \cite{BB,BB*}  is that it is based on observing/measuring the solution for only a finite collection of data}. By data, we  mean a finite collection of functionals evaluated on a trajectory, such as nodal observations, i.e., point values of the velocity field at finitely many points on an uniform grid or a  finite set of Fourier modes---see the related notion of determining functionals \cite{chueshov}. The proof of the main result relies on {\em nudging}, a  {\em data assimilation}  technique first introduced in the context of fluid dynamics by Azouani-Olson-Titi  \cite{AOT}. This involves constructing an approximation  function $w$ using the observed data  from the reference solution $u$ in \eqref{1}. The $H^1$- norm of $w$ is then used to formulate the regularity criterion. The approximation $w$ is obtained by constructing a piecewise constant function and then mollifying it to be able to obtain it's $H^1$- norm. Though this was sufficient to obtain the regularity criterion, the mollification process makes it difficult to establish a direct relationship between the observed data and the regularity criterion.

In this paper we modify the interpolation function to reformulate our regularity criterion in \cite{BB*}. We consider a piecewise linear interpolation function which allows us to skip the mollification step and directly obtain the $H^1$- norm of the interpolation function $w$. This allows us to obtain a regularity criterion on $u$ as an explicit condition on the observations of $u$. This will provide a deeper insight into the criteria and the role played by observed data in regularity. Additionally, we show that $H^1$- norm of the interpolation operator can in fact be written in terms of a very natural approximation of its directional derivatives.  We will also establish that this reformulated regularity criterion is both necessary and sufficient for regularity.

\section{Notation and Preliminaries}
We adopt the convention that $\mathbf L^2(\Omega) \equiv (L^2(\Omega))^3$, with the analogous meaning for $\mathbf H^{\alpha}(\Omega)$, the standard $\mathbf L^2(\Omega)$-based Sobolev space of order $\alpha \in \mathbb R$. We denote by $(\cdot,\cdot)$ the inner-product  and $|\cdot|$ the norm in $\mathbf L^2(\Omega)$; similarly, in $H^1(\Omega)$, by $((\cdot,\cdot))$ and $||\cdot||$ respectively. We define $\mathcal{V}$ as the space of $L$-periodic trigonometric polynomials from $\mathbb{R}^3$ to $\mathbb{R}^3$ which are divergence free and zero mean. $H$ denotes the closure of $\mathcal{V}$ in $\mathbf L^2(\Omega)$ while $V$ denotes the closure of $\mathcal{V}$ in $\mathbf H^1(\Omega)$, as is standard \cite{fmrt}. Additionally, we denote by $P_{\sigma}: \mathbf L^2(\Omega)\to H$ the Leray-Hopf  projector.

 We now cast \eqref{1} in terms of the appropriate operators.
	Let $D(A)=\mathbf{H}^2(\Omega)\cap V$ and  consider $A : H \supset D(A)\to H$ as the unbounded operator associated to the bilinear form, $Au(v)=((u,v))$.
	We recall that $A$ is positive, self-adjoint and has a compact inverse; hence, there is a complete orthonormal set of
	eigenfunctions $\phi_{j}\in H$, such that $A\phi_{j} = \lambda_{j}\phi_{j}$ and we may order $0<\lambda_{1} \leq \lambda_{2} \leq \lambda_{3} \leq \dots $  the eigenvalues (repeated according to multiplicity). We also have the Poincar\'e inequality: ~$\lambda_1^{1/2}|v|\leq\|v\|,~~ v\in V.$
	Now we define the bilinear form $B:V\times V\to V'$ by
	\begin{equation}\label{orthoganal}
	\langle B(u,v),w\rangle_{V'\times V}=\big(((u\cdot\nabla) v),w\big), ~\text{ which satisfies }~B(u,w,w) = 0,~~\forall~ u\in V,~w\in V.
	\end{equation}
	\noindent
With the above notation, by applying $P_{\sigma}$ to \eqref{1}, we may express the 3D NSE and the relevant data assimilation algorithm, given by the solution $w$, in the following functional form:

\begin{align}\label{3dnav}
\left\{
\begin{aligned}
\frac{\partial u}{\partial t}+B(u,u)+\nu Au &= f,\\
\nabla\cdot u &= 0,\\
u(t=0) &= u_0 .
\end{aligned}
\right.
\tag{2.2}
\qquad
\left\{
\begin{aligned}
\frac{\partial w}{\partial t}+B(w,w)+\nu Aw &= f + \mu(Iu-Iw),\\
\nabla\cdot w &= 0,\\
w(t=0) &= 0 .
\end{aligned}
\right.
\end{align}

\vskip.2cm

Here, $I$ is an adequate {\em interpolation operator}, which we elaborate  in the following section
\section{Interpolation Operator}
As part of the data assimilation algorithm, we will incorporate observed data to our model through interpolation operators. In this work, we primarily consider Fourier modes (modal observation) and nodal observations of a reference solution $u$ of \eqref{1}. The data-assimilated solution will correspond to a related system of comparable type. We now introduce our interpolation operator.

For the modal case, $I_h u=P_N(u)$ with $h\sim 1/\lambda_N^{1/2}$, where $P_K$ denotes the orthogonal projection onto the space spanned by the first $N$ eigenvectors of the Stokes operator $A$. The modal interpolation function satisfies
\begin{equation}\label{intest}
	|I_h(v)|\leq c|v|~~ \forall v\in L^2(\Omega) \text{ and } |I_h(v)-v| \leq ch\|v\|~~ \forall v\in H^1(\Omega).
\end{equation}

In the nodal case, the domain $\Omega$ is partitioned into tetrahedrons and the reference solution is observed on the vertices of the tetrahedron. In this paper we consider a specific partition obtained by first dividing the domain into $N$ smaller cubes, $Q_{\alpha}$, of side length $h$, where $\alpha\in\mathcal{J}=\left\{(j, k, l)\in \mathbb{N}\times \mathbb{N}\times \mathbb{N}~:~1\leq j, k, l\leq \N^{1/3}\right\}$.
{\red What is $N^{1/3}$} Each cube $Q_{\alpha}$ is further divided into five tetrahedrons $T_1^\alpha,\dots,T_5^\alpha$ as shown in the figure below. Note that for each $i\in\{1,\dots,5\}$, $T_i^{\alpha_1}$ can be obtained by translating $T_i^{\alpha_2}$ for all $\alpha_1,\alpha_2\in\mathcal{J}$. The reference solution is observed on the nodes of the tetrahedrons (which are the same as that of the cubes). The observations are then used to construct a piece-wise linear function $\mathcal{I}u$ on the entire domain $\Omega$ with $\mathcal{I}u$ being linear in each tetrahedron. $\mathcal{I}u$ is then corrected to have zero average to give $Iu$.  Mathematically,  $\mathcal{I}u$ is obtained as follows:
\begin{equation}\label{iu}
    \mathcal{I}u(x)=\sum_{k=1}^4 a_k^{\alpha,i}u(v_k^{\alpha,i}),\quad \text{for}\quad x=\sum_{k=1}^4a_k^{\alpha,i}v_k^{\alpha,i}\in T_i^\alpha,\quad 0\leq a_k^{\alpha,i}\leq 1,\quad \sum_{k=1}^4a_k^{\alpha,i}=1,
\end{equation}
where $v_j^{\alpha,i}$ are the vertices of the tetrahedron $T_i^\alpha$. Alternately, writing $a_4^{\alpha,i}=1-\sum_{j=1}^3a_j^{\alpha,i}$, we obtain
\begin{equation}\label{reference}
    \mathcal{I}u(x)=u(v_4^{\alpha,i})+\sum_{k=1}^3 a_k^{\alpha,i}(u(v_k^{\alpha,i})-u(v_4^{\alpha,i}))\quad \text{for}\quad x=v_4^{\alpha,i}+\sum_{k=1}^3 a_k^{\alpha,i}(v_k^{\alpha,i}-v_4^{\alpha,i})\in T_i^\alpha.
\end{equation}
$Iu$ is then defined as $Iu(x)=\mathcal{I}u(x)-\frac{1}{L^3}\int_{\Omega}\mathcal{I}u(y)\,dy$.
We see in \eqref{reference} that $v_4^{\alpha,i}$ is the ``reference'' vertex with respect to which any point in $T^\alpha_i$ is represented.
\begin{figure}[htbp]
\centering
\tdplotsetmaincoords{\MyElev}{\MyAzim}

\begin{subfigure}{.32\textwidth}
\centering
\begin{tikzpicture}[tdplot_main_coords,scale=\PanelScale,line cap=round]
  \CubeCoords \DrawCube \DrawGuides
  \HighlightTetra{blue}{O}{A}{B}{C}
\end{tikzpicture}
\subcaption{$T_1^{\alpha}$}
\end{subfigure}\hfill
\begin{subfigure}{.32\textwidth}
\centering
\begin{tikzpicture}[tdplot_main_coords,scale=\PanelScale,line cap=round]
  \CubeCoords \DrawCube \DrawGuides
  \HighlightTetra{orange}{A}{B}{C}{ABC}
\end{tikzpicture}
\subcaption{$T_2^{\alpha}$}
\end{subfigure}\hfill
\begin{subfigure}{.32\textwidth}
\centering
\begin{tikzpicture}[tdplot_main_coords,scale=\PanelScale,line cap=round]
  \CubeCoords \DrawCube \DrawGuides
  \HighlightTetra{green!60!black}{A}{B}{AB}{ABC}
\end{tikzpicture}
\subcaption{$T_3^{\alpha}$}
\end{subfigure}

\vspace{0.6em}

\begin{subfigure}{.32\textwidth}
\centering
\begin{tikzpicture}[tdplot_main_coords,scale=\PanelScale,line cap=round]
  \CubeCoords \DrawCube \DrawGuides
  \HighlightTetra{red}{A}{C}{AC}{ABC}
\end{tikzpicture}
\subcaption{$T_4^{\alpha}$}
\end{subfigure}\hfill
\begin{subfigure}{.32\textwidth}
\centering
\begin{tikzpicture}[tdplot_main_coords,scale=\PanelScale,line cap=round]
  \CubeCoords \DrawCube \DrawGuides
  \HighlightTetra{purple}{B}{C}{BC}{ABC}
\end{tikzpicture}
\subcaption{$T_5^{\alpha}$}
\end{subfigure}

\caption{Five copies of the cube; each panel highlights one tetrahedron of the standard 5-tetrahedra triangulation.}
\label{fig:cube-5tet-panels-same-view-half}
\end{figure}
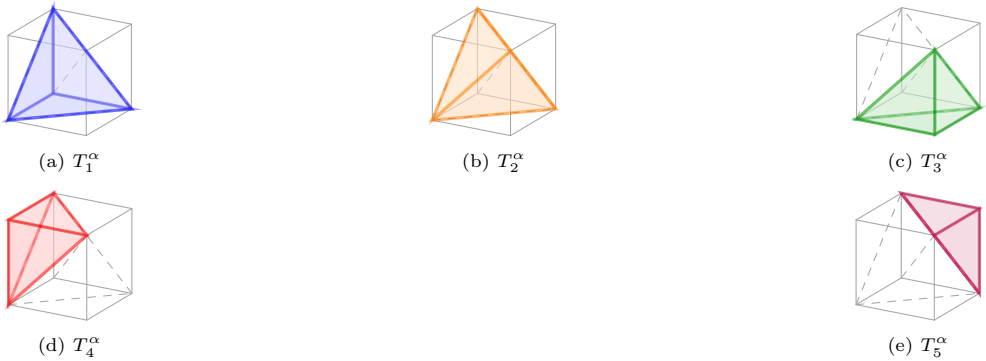

$Iu$ serves as an approximation of $u$. $\mathcal{I}u$ satisfies the following inequality \cite[Theorem~4.4.20]{brenscott}:
\begin{equation}\label{in2}
    |u(t)-\mathcal{I}u(t)|\leq ch^2\|u(t)\|_{H^2(\Omega)}.
\end{equation}
Therefore, using \eqref{in2} and, the fact that $u$ has zero average, and Holder's inequality, we get
\begin{equation}\label{in}
    |u(t)-Iu(t)|\leq |u(t)-\mathcal{I}u(t)|+\left|\frac{1}{L^3}\int_\Omega\mathcal{I}u\right|\leq |u(t)-\mathcal{I}u(t)|+\left|\frac{1}{L^3}\int_\Omega\mathcal{I}u-u\right|\leq ch^2\|u(t)\|_{H^2(\Omega)}.
\end{equation}
For the remainder of this paper, $c$ represents a constant independent of the function in the inequality.

\section{$H^1$-norm of $Iu$ purely in terms of data}
Before we state the reformulated regularity criterion, we will evaluate the $H^1$-norm of $Iu$ and express it explicitly in terms of the observed data.

In the modal case this is quite direct as $P_Nu$ represents the observed data and
\begin{equation}\label{pk}
    \|\nabla Iu\|=|\nabla P_N(u)|^2\sim \sum_{\lambda_k\leq N^2}\lambda_{k}|\hat{u}(k)|^2,~h\sim 1/\lambda_N^{1/2}.
\end{equation}
In the nodal case providing an estimate for $\|Iu\|$ explicitly in terms of the data requires more work, which we present now. Let $e_j$ denote the unit vector in the $x_j$ direction and $w_j^{i}=v_j^{\alpha,i}-v_4^{\alpha,i}$ for $j=1,2,3$. Note that $w_j^{i}$ does not depend on $\alpha$ as  for each $i\in\{1,\dots,5\}$ al tetrahedrons $T^\alpha_i$ are translations of each other and $w_j^{i}=v_j^{\alpha,i}-v_4^{\alpha,i}$ are edge vectors of $T_i^\alpha$.   For each $T^\alpha_i$, the vectors $w_j^{i}$ are linearly independent. Hence, there exists an invertible matrix $M^{i}$ such that
\begin{equation}\label{ej}
    [e_1,e_2,e_3]=[\widehat{w}_1^{i},\widehat{w}_2^{i},\widehat{w}_3^{i}]*M^{i},\quad\text{or equivalently}\quad e_j=\sum_{k=1}^3M^{i}_{k,j}\widehat{w}_{k}^{i}=\sum_{k=1}^3\frac{M^{i}_{k,j}w_{k}^{i}}{|w_{k}^{i}|},
\end{equation}
where $\widehat{w}_k^{i}$ is the unit vector in the direction of $w_k^{i}$. Note that since $M_{k,j}^i$ is defined using unit vectors, it is independent of $h$. Consequently,  for $x\in T^\alpha_i$, using \eqref{iu} and and\eqref{ej} we obtain
\begin{equation}\label{ij}
\begin{split}
    \partial_jIu(x)&=\lim_{s\to 0}\frac{1}{s}[Iu(x+se_j)-Iu(x)]\\
    &=\lim_{s\to 0}\frac{1}{s}\left[Iu\left(v_{4}^{\alpha,i}+\sum_{k=1}^3\left(a_{k}^{\alpha,i}+\frac{sM^i_{k,j}}{|w_{k}^i|}\right)w_{k}^i\right)-Iu\left(v_{4}^{\alpha,i}+\sum_{k=1}^3a_{k}^{\alpha,i}w_{k}^i\right)\right]\\
    &=\lim_{s\to 0}\frac{1}{s}\left[\sum_{k=1}^3 \frac{sM^i_{k,j}}{|w_{k}^i|}\left(u(v_k^{\alpha,i})-u(v_4^{\alpha,i})\right)\right]=\sum_{k=1}^3 \frac{M^i_{k,j}}{|v_{k}^{\alpha,i}-v_{4}^{\alpha,i}|}\left(u(v_k^{\alpha,i})-u(v_4^{\alpha,i})\right)
    \end{split}
\end{equation}
Let $D^{\alpha,i}(u)$ be the matrix define as 
\begin{equation}\label{d}
    D^{\alpha,i}_{l,k}(u)=\frac{u_l(v_k^{\alpha,i})-u_l(v_4^{\alpha,i})}{|v_{k}^{\alpha,i}-v_{4}^{\alpha,i}|}.
\end{equation}
Then, we may rewrite \eqref{ij} as $\nabla Iu(x)= D^{\alpha,i}(u)M$ for $x\in T_i^\alpha$.
Therefore
\begin{equation}\label{D}
    \|M^{-1}\|_{L^\infty}^{-2}\sum_{i,\alpha}\|D^{\alpha,i}(u,t)\|^2_{L^2}\leq\|\nabla Iu(t)\|^2_{L^2}\leq  \|M\|^2_{L^\infty}\sum_{i,\alpha}\|D^{\alpha,i}(u,t)\|^2_{L^2}.
\end{equation}
We therefore have an equivalence between $\|\nabla Iu(t)\|_{L^2}$ and $\|D^{\alpha,i}(u,t)\|_{L^2}$, where $D^{\alpha,i}(u,t)$ can be regarded as coarse scale directional derivatives along the edges of the tetrahedrons. This is an important novelty of this paper. In previous applications of data assimilation in the context of 3D NSE, the interpretability of $\nabla Iu$ was not clear as $Iu$ was defined as a piecewise constant function which was then mollified. We have shown that in the case of a linear interpolation operator, $\nabla Iu$ can be interpreted as an approximation of the first derivative of $u$.

\section{Main Result and Brief Discussion}
 We now state our regularity criterion in the modal and nodal cases. Before doing so, let us introduce the quantity $M_h$ which relates the relevant physical quantities $\nu$, $\lambda_1$, $T$, and  $h$ (which is tied to $\alpha$, as above). It serves as an upper bound for the $\|Iu(t)\|$ for almost all $t\in(0,T)$. Using \eqref{pk} and the equivalence in \eqref{D} we may now write $M_{h,[0,T]}$ explicitly in terms of the data as follows:
\begin{equation}\label{mh}
    \begin{split}
       M_{h,[t_0,T]}^2=\esssup_{t_0\leq t\leq T}
       \begin{dcases}
        \displaystyle\|\nabla P_N(u)\|_{L^2(\Omega)}^2\sim \sum_{\lambda_k\leq N^2}\lambda_{k}|\hat{u}(k)|^2,~h\sim 1/\lambda_N^{1/2}&(\text{Modal})\\[10pt]
         \displaystyle  \displaystyle \sum_{\alpha}\sum_{i=1}^5\|D^{\alpha,i}(u,t)\|^2_{L^2}.
           &(Nodal)
        \end{dcases}
    \end{split}
\end{equation}
We now state our reformulated regularity criterion in the following theorem.
  \begin{theorem}\label{criteria}
           Let $u$ be a Leray-Hopf weak solution to the 3D NSE given by the equations on the left of \eqref{3dnav} such that $u(0)\in V$ and $M_{h,[0,T]}$ be defined by \eqref{mh}.
           If there exists an $h>0$ such that
	    \begin{equation}\label{rere}
	       \max\left\{\nu\lambda_1,\frac{W_{h,[0,T]}^4}{\nu^3},\frac{c\|\nabla u(0)\|_{L^2(\Omega)}^4}{\nu^3}\right\}\leq\frac{\nu}{ch^2}, \quad\text{ where }\quad  W_{h,[0,T]}^2=\frac{c}{\nu^2\lambda_1} \|f\|_{L^2(\Omega)}^2 + cM_{h,[0,T]}^2,
	    \end{equation}
	    for some adequate, non-dimensional absolute constant $c$, then $u$ is regular, with $\|u(t)\|\lesssim W_h$ for $t\in [0,T]$.
        \end{theorem}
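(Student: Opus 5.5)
The plan is to use the data assimilation system on the right of \eqref{3dnav} as an auxiliary problem. First I show that $w$ stays bounded in $V$ on $[0,T]$, with a bound given only by $W_{h,[0,T]}$. Then I show that, for $\mu$ chosen suitably, $w$ synchronizes with $u$, so the bound transfers to $u$. Since $w(0)=0\in V$ and the forcing $f+\mu Iu$ lies in $L^\infty(0,T;L^2)$, the system for $w$ has a strong solution locally in time. I make this rigorous with Galerkin approximations together with the a priori estimates below, and a continuation argument extends it to all of $[0,T]$.

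\textbf{Step 1 ($V$-bound on $w$).} Take the inner product of the $w$-equation with $Aw$. In the periodic setting $(B(w,w),Aw)$ does not vanish in 3D, so I bound it by $c|w|^{1/2}\|w\|\,|Aw|^{3/2}$ (Agmon/Ladyzhenskaya) and absorb part of it with Young's inequality. For the nudging term I write
\[
\mu(Iu-Iw,Aw)=\mu(\nabla Iu,\nabla w)-\mu\|w\|^2+\mu(w-Iw,Aw).
\]
This makes the data enter only through $\|\nabla Iu\|\le M_{h,[0,T]}$, which holds for the modal operator and, via \eqref{D}, for the nodal one. The last term is controlled by $\mu c h\|w\|\,|Aw|$, using \eqref{intest} in the modal case and an $H^1$-stable analogue of \eqref{in} in the nodal case. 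The condition $\mu c h^2\le\nu$ lets me absorb it into $\nu|Aw|^2/2$. What remains is a differential inequality of the form
\[
\frac{d}{dt}\|w\|^2+\mu\|w\|^2\le \frac{c}{\nu^3}\|w\|^6+\frac{c}{\nu}|f|^2+c\mu M_h^2 .
\]
Choose $\mu\sim\nu/(ch^2)$. By \eqref{rere}, $\mu\ge W_h^4/\nu^3$ and $\mu\ge\nu\lambda_1$. A continuity (bootstrap) argument starting from $\|w(0)\|=0$ then shows $\|w(t)\|^2\le cW_h^2$ on $[0,T]$. The key check is that the cubic term at the level $\|w\|^2=2cW_h^2$ is dominated by $\mu\|w\|^2$; this is exactly the requirement $cW_h^4/\nu^3\le\mu$.

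\textbf{Step 2 (synchronization and transfer to $u$).} Let $\delta=u-w$. While $u$ is strong, i.e.\ on its maximal interval of regularity $[0,T^*)$, which is nonempty because $u(0)\in V$, energy estimates for $\delta$ in $V$ give
\[
\frac{d}{dt}\|\delta\|^2+\Big(\mu-\frac{c}{\nu^3}(\|u\|^4+\|w\|^4)\Big)\|\delta\|^2\le 0 .
\]
Here I again use $\mu ch^2\le\nu$ to absorb $\mu(\delta-I\delta,A\delta)$. A second bootstrap gives $\|u(t)\|^2\le cW_h^2+c\|u(0)\|^2$ and keeps the bracket positive. That bootstrap uses the Step 1 bound on $w$ and the hypotheses $\mu\gtrsim W_h^4/\nu^3$ and $\mu\gtrsim\|\nabla u(0)\|^4/\nu^3$ (the third entry of \eqref{rere}). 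Thus $\|u(t)\|$ stays bounded as $t\to T^*$, so $T^*>T$ by the standard blow-up alternative. Weak–strong uniqueness identifies the Leray–Hopf solution with this strong one, giving regularity and the bound $\|u(t)\|\lesssim W_h$ (plus the initial-data term absorbed as stated).

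\textbf{Main obstacle.} The delicate point is the nodal case in Steps 1 and 2. The term $(w-Iw,Aw)$ needs an estimate $|v-Iv|\le ch\|v\|$ for $v\in H^1$, but point evaluation is not defined on $H^1$ in 3D. I would apply $I$ only to $u$, which is strong while the bootstrap runs, and to $w$, which is a strong solution, so point values make sense. I would then use the Bramble–Hilbert-type bound \eqref{in2} combined with interpolation, $h^2\|v\|_{H^2}$, so the term is still absorbed by $\nu|Av|^2$ when $\mu h^2\lesssim\nu$. The precise bookkeeping of constants in this step is where I expect the most work.
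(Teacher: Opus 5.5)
Your proposal is correct and is essentially the argument the paper points to when it defers to \cite[Theorem~1.1]{BB*}. You bound the nudged $w$ in $V$ using the data only through $\|\nabla Iu\|\lesssim M_{h,[0,T]}$, then run a synchronization bootstrap for $u-w$ on the maximal strong interval (where the third entry of \eqref{rere} enters), and handle the nodal interpolation error through \eqref{in}, i.e.\ $\mu ch^2|Aw|^2\le\frac{\nu}{4}|Aw|^2$, exactly as in your last paragraph.

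Small fixes:
\begin{itemize}
\item Use that $H^2$ bound already in Step 1, since no $H^1$-stable nodal estimate exists in 3D.
\item The trilinear bound is $c\|w\|^{3/2}|Aw|^{3/2}$; your version is dimensionally off, though your $\|w\|^6/\nu^3$ inequality is right.
\item The conclusion is really $\|u(t)\|\lesssim W_h+\|u(0)\|$, since the initial term cannot in general be absorbed into $W_h$.
\item The identity $(Iu,Aw)=(\nabla Iu,\nabla w)$ needs $Iu\in H^1$. This requires alternating the five-tetrahedra split with the parity of $j+k+l$, because a purely translated split cuts shared faces along different diagonals.
\end{itemize}
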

The proof would be along the lines of \cite[Theorem~1.1]{BB*} as $Iu$ satisfies \eqref{in}. We may in fact improve the above statement by working with less data. Since $u(0)\in V$, by \cite[Theorem 7.2]{fmrt} $u(t)\in V$ for $t\in T_*$ for some $T_*>0$. We hence only need to check for regularity on the interval $(t_0,T]$ for some $t_0\in (0,T_*)$, allowing us to work with data on the interval $[t_0,T]$ instead of $[0,T]$. Additionally, the regularity criterion obtained in \eqref{rere} is both necessary and sufficient for regularity. These facts are presented in the following theorem:
 \begin{theorem}\label{suff}
          Let $u$ be a weak solution to the 3D NSE given by \eqref{3dnav} such that $u(0)\in V$. Let $T_*$ be as in \cite[Theorem 7.2]{fmrt}, $t_0\in(0,T_*)$ and $M_{h,[t_0,T]}$ be defined by \eqref{mh}. Then $u$ is regular if and only if there exists an $h>0$ such that
	    \begin{equation}\label{regcrit2}
	       \max\left\{\nu\lambda_1,\frac{W_{h,[t_0,T]}^4}{\nu^3},\frac{c\|u(t_0)\|^4}{\nu^3}\right\}\leq\frac{\nu}{4ch^2}, \quad\text{ where }\quad  \tilde{W}_{h}^2=\frac{c|f|^2}{\nu^2\lambda_1}  + cM_{h,[t_0,T]}^2.
	    \end{equation}
    \end{theorem}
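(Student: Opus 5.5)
The argument splits into sufficiency and necessity.

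\emph{Sufficiency.} Since $u(0)\in V$, \cite[Theorem 7.2]{fmrt} makes $u$ strong on $[0,T_*)$, and $t_0\in(0,T_*)$. So $u(t_0)\in V$, and $u$ coincides on $[t_0,T]$ with the Leray--Hopf solution issued from $u(t_0)$ by weak--strong uniqueness. Applying Theorem~\ref{criteria} on $[t_0,T]$ with initial datum $u(t_0)$ gives regularity there. Condition \eqref{regcrit2} has the extra factor $1/4$ on the right, so it implies \eqref{rere} after time-translation, with $M_{h,[0,T]}$ replaced by $M_{h,[t_0,T]}$ and $\|\nabla u(0)\|$ by $\|u(t_0)\|$. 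Concretely, I would run the argument of \cite[Theorem~1.1]{BB*}:
\begin{itemize}
\item start the nudged system $w$ in \eqref{3dnav} at time $t_0$;
\item bound $\|w(t)\|$ using $\|Iu\|\le cM_{h,[t_0,T]}$, which follows from \eqref{D} in the nodal case and \eqref{pk} in the modal case;
\item show that $\delta=u-w$ stays small in $V$ via the estimate \eqref{in} and a bootstrap/continuity argument, with $\mu\sim\nu/h^2$.
\end{itemize}
Combining this with regularity on $[0,t_0]$ gives regularity on $[0,T]$.

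\emph{Necessity.} Assume $u$ is regular, i.e.\ $K:=\sup_{[0,T]}\|u(t)\|<\infty$. The key step is to bound $M_{h,[t_0,T]}$ uniformly in small $h$. In the modal case this is immediate: $\|\nabla P_Nu\|\le\|u\|\le K$.

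In the nodal case I need $\|\nabla Iu(t)\|\le c\|u(t)\|_{H^2}$ uniformly in $h$. By \eqref{D}, $\|\nabla Iu\|$ is comparable to the discrete edge differences $D^{\alpha,i}$. Standard stability of the Lagrange interpolant on the fixed shape-regular five-tetrahedra mesh gives $|\nabla\mathcal I u|_{L^2(T)}\le c\,|u|_{H^1(T)}+c\,h\,|u|_{H^2(T)}$ (Bramble--Hilbert / \cite{brenscott}). Since $u$ is regular and $f\in L^2$, parabolic smoothing for strong solutions gives $u\in L^\infty(t_0,T;H^2)$ away from $t=0$. This is exactly why the criterion is posed on $[t_0,T]$ with $t_0>0$.

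Hence $M_{h,[t_0,T]}\le C$ for all $h\le 1$, with $C$ depending on $K$, $\nu$, $f$, $t_0$ and $T$. So $W_{h,[t_0,T]}$ is bounded independently of $h$, as is $\|u(t_0)\|\le K$. The left side of \eqref{regcrit2} is therefore bounded uniformly in $h$, while the right side $\nu/(4ch^2)\to\infty$; choosing $h$ small enough gives \eqref{regcrit2}.

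\emph{Main obstacle.} I expect the uniform $H^2$ bound on $[t_0,T]$ in the nodal case to be the delicate point, since point values require continuity. I would obtain it from the $H^1$ bound: test the equation with $Au$ and use the standard estimate $\frac{d}{dt}\|u\|^2+\nu|Au|^2\le c\nu^{-3}\|u\|^6+c|f|^2/\nu$. This yields $\int|Au|^2<\infty$. Then apply the time-weighted estimate for $t|Au|^2$, or equivalently analyticity/Gevrey smoothing from \cite{fmrt}, to get $\sup_{[t_0,T]}|Au|<\infty$. Finally, $H^2\subset C^0$ in 3D makes the nodal values well defined.
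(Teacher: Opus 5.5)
Your sufficiency direction matches the paper's. You invoke Theorem~\ref{criteria} on $[t_0,T]$, where the factor $1/4$ makes \eqref{regcrit2} imply the time-shifted \eqref{rere}, and combine this with regularity on $[0,t_0]$.

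For necessity you take a genuinely different route. The paper quotes \cite{BB*} for the boundedness of $\esssup_t\|\nabla Iu\|_{L^2}$ as $h\to0$, and transfers this to $\sum_{i,\alpha}\|D^{\alpha,i}\|_{L^2}^2$ through \eqref{D}. You prove the bound directly for the piecewise-linear interpolant. The local estimate $|u-\mathcal I u|_{H^1(T)}\le ch|u|_{H^2(T)}$ on the five fixed reference shapes gives $\|\nabla Iu(t)\|_{L^2}\le\|\nabla u(t)\|_{L^2}+ch\|u(t)\|_{H^2}$, and you combine this with $u\in L^\infty(t_0,T;H^2)$.

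Your route costs a parabolic smoothing estimate, but it is self-contained and fits the operator actually used here. The interpolant of \cite{BB*} was a mollified piecewise-constant one. Nodal Lagrange interpolation is not $H^1$-stable in 3D, so any uniform bound for the present $Iu$ has to pass through $H^2$. For $u(0)\in V$ that is only available uniformly on $[t_0,T]$ with $t_0>0$; for $u(0)$ unbounded near a node, the nodal values need not stay bounded as $t\to0$. Your argument therefore explains the role of $t_0$, which the paper's citation (stated over $[0,T]$) leaves implicit. It also yields $\limsup_{h\to0}M_{h,[t_0,T]}\lesssim\sup_{[t_0,T]}\|u\|$.

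Two minor corrections. First, $u|_{[t_0,T]}$ is a Leray--Hopf solution issued from $u(t_0)$ because of the energy equality on $[0,t_0]$ combined with the energy inequality from $0$. Weak--strong uniqueness is not the reason; it is what identifies $u$ with the strong solution on $[0,T_*)$.

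Second, $f$ is only in $L^2$, so testing with $A^2u$ would require $f\in V$. Get the $L^\infty(t_0,T;D(A))$ bound instead from the estimate for $t|u_t|^2$ in the time-differentiated equation ($f$ is time-independent), and then read off $|Au|$ from the equation. Alternatively, use the time-analyticity with values in $D(A)$ from \cite{fmrt}.
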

\begin{proof}
    If $u$ satisfies the criterion in \eqref{regcrit2}, then by Theorem~\ref{criteria}, $u$ is regular on the time interval $[t_0,T]$. Since $u$ is regular on $[0,t_0]$ by definition of $t_0$, we see that $u$ is regular on $[0,T]$.

    Now, let $u$ be regular on the time interval $[0,T]$. To show $u$ satisfies \eqref{regcrit2}, we need to prove $M_{h,[t_0,T]}$ is bounded as $h\to 0$. This is because by the equality on the right of \eqref{regcrit2}, if $M_{h,[t_0,T]}$ is bounded as $h\to 0$ then so is $W_{h,[t_0,T]}$, which would then allow us to satisfy the first inequality in\eqref{regcrit2} by choosing $h$ sufficiently small.

From \cite{BB*}, we have that $\esssup_{0\leq t\leq T}\|\nabla Iu\|_{L^2}$ remains bounded as $h\to 0.$ Now since $\|\nabla Iu\|^2_{L^2}\sim \sum_{i,\alpha}\|D^{\alpha,i}(u,t)\|^2_{L^2}$ from \eqref{D}, we immediately obtain the statement of this theorem.

\end{proof}

        We conclude with two remarks on the interpretation of the result. First, the regularity criterion obtained here depends only on a finite number of observations of the flow, expressed through the quantities in \eqref{rere}. In particular, the criterion can be verified directly from experimentally accessible data, such as finitely many Fourier modes or nodal measurements of the velocity field. Second, although the inequality in \eqref{rere} involves both the viscosity $\nu$ and the observation scale $h$, Theorem~\ref{suff} shows that the existence of such an $h$ is in fact equivalent to the regularity of the solution. In this sense, the condition provides an observable characterization of regularity for weak solutions of the three–dimensional Navier–Stokes equations. The reformulation presented here clarifies the role of the measured data by expressing the relevant $H^1$ quantity entirely in terms of coarse-scale directional derivatives determined from the observations, thereby eliminating the mollification step required in earlier formulations and providing a more transparent link between measurements and regularity.

\bibliographystyle{elsarticle-num}

\end{document}